\newtheorem{theorem}{Theorem}[section]
\newtheorem{lemma}[theorem]{Lemma}
\newtheorem{definition}[theorem]{Definition}
\newtheorem{corollary}[theorem]{Corollary}
\newtheorem{proposition}[theorem]{Proposition}
\newtheorem{remark}[theorem]{Remark}
\newcommand{\lcm}{\mathrm{lcm}}
\newcommand{\F}{\mathbb F}
\newcommand{\ord}{\mathrm{ord}}
\newcommand{\doublespace}
\begin{document}

\begin{frontmatter}

\title{Counting distinct functional graphs from linear finite dynamical systems}

\author{Lucas Reis}

\address{Departamento de Matem\'{a}tica, Universidade Federal de Minas Gerais, UFMG, Belo Horizonte, MG, 31270901, Brazil.}

\ead{lucasreismat@mat.ufmg.br}\journal{Elsevier}
\begin{abstract}
Let $\F_q$ be the finite field with $q$ elements and, for each positive integer $n$, let $A_q(n)$ be the number of non isomorphic functional graphs arising from $\F_q$-linear maps $T:\F_{q}^n\to \F_{q}^n$. In 2013, Bach and Bridy proved that, if $q$ is fixed and $n$ is sufficiently large, the quantity $\frac{\log \log A_q(n)}{\log n}$ lies in the interval $[\frac{1}{2}, 1]$. By combining some ideas from linear algebra, combinatorics and number theory, in this paper we provide sharper estimates on the function $A_q(n)$ and, in particular, we prove that $\lim\limits_{n\to +\infty}\frac{\log\log A_q(n)}{\log n}=1$ for every prime power $q$.
\end{abstract}
\begin{keyword}
linear dynamical system; functional graph; partitions;  finite fields
\MSC[2010]{Primary 37P25\sep Secondary 15A21\sep 05C20}
\end{keyword}
\end{frontmatter}

\section{Introduction}
Let $q$ be a prime power, let $\F_q$ be the finite with $q$ elements and let $\F_q^n$ be the canonical $n$-dimensional vector space over $\F_q$. Given an $\F_q$-linear map $T:\F_q^n\to \F_q^n$, we can associate to it a directed graph $G=G(T/\F_{q}^n)$  with vertex set $V(G)=\F_{q}^n$ and edge set $E(G)=\{v\to T(v)\,|\, v\in \F_{q^n}\}$. Given this setting, it is natural to ask about the number $A_q(n)$ of non isomorphic graphs arising from such maps. In 2013, Bach and Bridy~\cite{bach} considered the number $D_q(n)$ in the more general setting of $\F_q$-affine maps $Tx+b$ and proved that
$$c_1\sqrt{n}\le \log D_q(n)\le c_2\frac{n}{\log\log n},$$
if $n$ is large, where $c_1, c_2>0$ depend only on $q$. However, the lower bound is obtained by a construction of certain $\F_q$-linear maps  that are bijective, and generate non isomorphic graphs. Of course we have that $A_q(n)\le D_q(n)$ and so their results imply that
$$\frac{1}{2}\le \frac{\log\log A_q(n)}{\log n}\le  1,$$
if $q$ is fixed and $n$ is large.  In this paper we obtain the following estimates for the function $A_q(n)$. 

\begin{theorem}\label{thm:main}
Let $q$ be a prime power. For each positive integer $n$, let $\mathcal P_n$ be the set of all nonzero vectors $\lambda=(\lambda_1, \ldots, \lambda_n)$, where each $\lambda_i$ is a nonnegative integer and $\sum_{i=1}^n\lambda_i\cdot i\le n$. If $\sigma_i$ is the number of divisors of $q^i-1$ and $\sigma_i^*$ is the number of positive integers $e$ such that $i=\ord_eq:=\min\{j>0\,|\, q^j\equiv 1\pmod e\}$, then 
$$\sum_{\lambda\in \mathcal P_n}\prod_{i=1}^n\dbinom{\sigma_i^*+\lambda_i-1}{\lambda_i}\le A_q(n)\le 2^{4\sqrt{n}}\cdot \sum_{\lambda\in \mathcal P_n}\prod_{i=1}^n\dbinom{\sigma_i+\lambda_i-1}{\lambda_i}.$$
\end{theorem}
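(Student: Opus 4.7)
The plan is to reduce the counting of graphs $G(T/\F_q^n)$ to a counting of multisets of invariants of the linear map $T$ via the primary decomposition. Every $T$ corresponds (up to similarity) to a multiset of primary blocks $\mathrm{Comp}(\pi^e)$, where $\pi\in\F_q[x]$ is irreducible and $e\ge 1$, so that $\F_q^n=\bigoplus V_{\pi,e}$ and $T$ acts as $\mathrm{Comp}(\pi^e)$ on $V_{\pi,e}$. The functional graph of $T$ is then the tensor (categorical) product of the functional graphs of the individual blocks, so the classification reduces to classifying the graph of a single primary block up to isomorphism. For $\pi=x$ (a nilpotent block of exponent $e$), the graph is a rooted tree on $q^e$ vertices determined solely by $e$. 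For $\pi\ne x$ of degree $d$ with roots of multiplicative order $m$, I claim the graph depends only on $(d,e,m)$: one computes the order of $x$ in each local ring $R_j=\F_q[x]/(\pi^j)$ for $j=1,\ldots,e$, and since $\gcd(m,\mathrm{char}(\F_q))=1$ makes $x^m-1$ separable, the $(\pi)$-adic valuation of $x^m-1$ in $R_j$ equals $1$, so $\ord_{R_j^*}(x)$ is a function of $(e,m,\mathrm{char}(\F_q))$ alone.

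Once this classification is in place, the lower bound follows by restricting to semisimple invertible $T$ (all $e=1$). A block of dimension $i$ is then indexed by $m$ with $\ord_m q=i$, of which there are exactly $\sigma_i^*$. For $\lambda\in\mathcal P_n$, the number of multisets of block types is $\prod_i\binom{\sigma_i^*+\lambda_i-1}{\lambda_i}$, and distinct multisets give non-isomorphic graphs; padding to total dimension $n$ via a fixed trivial block completes the construction. For the upper bound, I allow arbitrary $e$ and let $\lambda_i$ count the total number of non-nilpotent primary blocks of dimension $i=de$. Such a block is determined (up to graph isomorphism) by a choice of $m\mid q^i-1$, since $d=\ord_m q$ and $e=i/d$ are then forced; there are exactly $\sigma_i$ such $m$, using the identity $\sigma_i=\sum_{d\mid i}\sigma_d^*$. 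Multisets of these types contribute $\prod_i\binom{\sigma_i+\lambda_i-1}{\lambda_i}$, while the nilpotent part is a partition of $n-\sum\lambda_i i$, contributing at most $2^{4\sqrt n}$ choices by the classical bound on the partition function.

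The main obstacle will be the injectivity step in the lower bound: showing that distinct multisets of $(i,m)$ data yield non-isomorphic cycle structures in the tensor product. A single block of dimension $i$ with root order $m$ gives one fixed point together with $(q^i-1)/m$ cycles of length $m$; from the cycle lengths of the tensor product (which are lcms of the block cycle lengths, with multiplicities given by gcds) and their multiplicities, one should be able to recover the original multiset by an inductive argument on cycle lengths or a generating-function inversion. A secondary technical step is the cycle/tree structure computation for $\mathrm{Comp}(\pi^e)$, which is routine using the local-ring unit group structure but must carefully track the $\mathrm{char}(\F_q)$-part of the order of $x$.
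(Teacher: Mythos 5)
Your proposal follows essentially the same route as the paper: primary/rational-canonical decomposition into blocks, Elspas's description of each non-nilpotent block's graph by its $(\dim,\ord)$ data (giving the $\sigma_i$ upper bound via multisets of divisors of $q^i-1$), restriction to the semisimple case for the $\sigma_i^*$ lower bound, and the bound $P(k)\le 2^{4\sqrt{n}}$ for the nilpotent part. The injectivity step you single out as the main obstacle is exactly the paper's Theorem~\ref{thm:factor} (unique factorization of $\bigotimes_i(C_1+\alpha_i C_{k_i})$), which the paper proves by precisely the induction on cycle lengths and multiplicities that you propose.
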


Theorem~\ref{thm:main} follows by some preliminary estimates that are obtained through combinatorial arguments and previous results. First, we show that $A_q(n)$ is exactly given by the convolution of two functions: this is presented in Theorem~\ref{thm:1} and is mainly derived from a special decomposition of $\F_q^n$ into invariant subspaces. From the latter, bounds in Theorem~\ref{thm:main} are mainly derived from a result of Elspas~\cite{elspas}. For the lower bound, we also need to provide a ``unique factorization'' result for a special class of directed graphs: this is presented in Theorem~\ref{thm:factor} and might be of independent interest.

By employing some basic number-theoretic results we provide a simpler proof of the upper bound $\log A_q(n)\le c\frac{n}{\log\log n}$, and also substantially improve the lower bound given in~\cite{bach}. In particular, we are able to prove the following result. 

\begin{corollary}\label{thm:nice}
For any prime power $q$, we have that $\lim\limits_{n\to +\infty}\frac{\log\log A_q(n)}{\log n}=1$.
\end{corollary}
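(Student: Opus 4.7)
The plan is to sandwich $\log\log A_q(n)/\log n$ between $1-o(1)$ and $1+o(1)$ using the two halves of Theorem~\ref{thm:main}.

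For the upper bound ($\limsup \le 1$), I would apply the classical divisor estimate $\sigma_i = d(q^i-1) \le 2^{(1+o(1))i\log q/\log(i\log q)}$ to the right-hand side of Theorem~\ref{thm:main}. A saddle-point evaluation of $\prod_i(1-y^i)^{-\sigma_i}$ at a suitable $y\in(0,1)$ (or equivalent bookkeeping) should then yield $\log A_q(n) = O(n/\log\log n)$, recovering the Bach--Bridy estimate and giving $\log\log A_q(n)/\log n \le 1+o(1)$.

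For the lower bound ($\liminf \ge 1$), I plan to keep only a single summand in the lower bound of Theorem~\ref{thm:main}. Set $k = \lceil \log_2 \log_2 n \rceil$ and let $i = p_1 p_2 \cdots p_k$ be the $k$-th primorial. By the prime number theorem, $\log i = \theta(p_k) = (1+o(1))k\log k = O(\log\log n \cdot \log\log\log n) = o(\log n)$, so $i = n^{o(1)}$. Taking $\lambda_i = \lfloor n/i\rfloor$ and $\lambda_j = 0$ for $j \ne i$ gives $\lambda \in \mathcal P_n$, so $A_q(n) \ge \binom{\sigma_i^* + \lfloor n/i\rfloor - 1}{\lfloor n/i\rfloor}$. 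Using the elementary inequality $\binom{S+K-1}{K} \ge (S/K)^K$ with $S = \sigma_i^*$ and $K = \lfloor n/i\rfloor$, together with the estimate $\sigma_i^* \ge n/C_q$ for a constant $C_q>0$ claimed below, yields
\[
\log A_q(n) \ge K\log(S/K) \ge (n/i)(\log i - O(1)) = n^{1-o(1)},
\]
since $\log i = o(\log n)$ while $\log i \to \infty$. Hence $\log\log A_q(n)/\log n \ge 1-o(1)$.

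The hard part will be the estimate $\sigma_i^* \ge 2^{d(i)-O_q(1)}$ for this primorial $i$. I would invoke Zsygmondy's theorem: for each divisor $d\mid i$ outside a bounded exceptional set, there is a prime $r_d$ with $\ord_{r_d}q = d$. Since $i$ is squarefree, any subset $T$ of $\{r_d\}$ containing $r_i$ produces a squarefree $e = \prod_{r\in T}r$ dividing $q^i-1$ with $\ord_e q = \lcm\{d:r_d\in T\} = i$, yielding at least $2^{d(i)-O_q(1)}$ valid $e$. The choice of $k$ makes $d(i) = 2^k \ge \log_2 n$, so $\sigma_i^* \ge n/C_q$ as required, and combining this with the binomial estimate above closes the argument.
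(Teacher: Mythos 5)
Your proposal is correct and follows essentially the same route as the paper: the decisive lower bound is obtained exactly as in the paper's final Proposition, by taking a primorial index $i=p_1\cdots p_k$ with $k\approx\log_2\log_2 n$, using Zsigmondy's theorem to get $\sigma_i^*\ge 2^{\tau(i)-O_q(1)}\ge n/C_q$ (the paper's Lemma~\ref{auxx}), and bounding a single binomial coefficient from below. The upper bound differs only in bookkeeping (a saddle-point evaluation of $\prod_i(1-y^i)^{-\sigma_i}$ versus the paper's direct splitting of the product at $N=\lfloor\log n/(2\log q)\rfloor$), but both rest on the same maximal-order estimate for the divisor function, and that half is in any case already contained in the Bach--Bridy bound quoted in the introduction.
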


Here goes the structure of the paper. Section 2 provides some background material on linear algebra and finite fields, and we also provide some preliminary results. Section 3 concentrates on the functional graphs of $\F_q$-linear maps that are bijections. Finally, in Section 4 we prove Corollary~\ref{thm:nice}.

\section{Preliminaries}
Throughout this section, $q$ is a fixed prime power, $\F_q$ denotes the finite field with $q$ elements and $n$ is a positive integer. We set $V_n=\F_q^n$ and $\Omega(q, n)$ is the set of all $\F_q$-linear maps from $V_n$ to itself.  We start with some graph notation.

\begin{definition}
If $G_1$ and $G_2$ are directed graphs with vertex sets $V(G_1)$ and $V(G_2)$, respectively, their tensor product $G_1 \otimes G_2$ is the directed graph with vertex set $V(G)= V(G_1) \times V(G_2)$ such that $(v_1,v_2)\to (w_1,w_2)\in E(G_1\otimes G_2)$ if and only if $v_i\to w_i\in E(G_i)$ for $i=1,2$. 
\end{definition}

\begin{definition}
For a positive integer $m$ and a directed graph $G_1$, $mG_1$ denotes the (disjoint) union of $m$ copies of $G_1$ and $C_m$ denotes the cyclic (directed) graph of length $m$.
\end{definition}

\subsection{Lemmata}

The following result is easily verified.

\begin{lemma}\label{lem:aux}
\begin{enumerate}
\item Let $T\in \Omega(q, n)$ and $V_n=U\oplus W$ be a decomposition of $V_n$ into two $T$-invariant subspaces. Then, up to a graph isomorphism, the following holds: $$G(T/V_n)=G(T/U)\otimes G(T/W).$$
\item If $m, n$ are positive integers, then $C_m\otimes C_n= \gcd(m, n)C_{\lcm(m, n)}$.
\end{enumerate}
\end{lemma}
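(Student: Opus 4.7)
The plan is to verify the two assertions directly from the definitions; neither presents serious difficulty, which is consistent with the author's remark that the lemma is ``easily verified''.

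For part (1), I would build the isomorphism explicitly. Since $V_n=U\oplus W$, every vector $v\in V_n$ admits a unique decomposition $v=u+w$ with $u\in U$ and $w\in W$, so the map $\phi:V_n\to U\times W$ given by $\phi(u+w)=(u,w)$ is a bijection. Because $U$ and $W$ are both $T$-invariant, $T(v)=T(u)+T(w)$ with $T(u)\in U$ and $T(w)\in W$, and therefore $\phi(T(v))=(T(u),T(w))$. Comparing with the definition of the tensor product, this shows that $\phi$ sends the edge $v\to T(v)$ of $G(T/V_n)$ to the edge $(u,w)\to(T(u),T(w))$ of $G(T/U)\otimes G(T/W)$, and conversely every edge of the tensor product arises in this way. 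So $\phi$ is the required graph isomorphism.

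For part (2), I would label the vertices of $C_m$ by $\Z/m\Z$ and those of $C_n$ by $\Z/n\Z$, so that the tensor product has vertex set $\Z/m\Z\times\Z/n\Z$ with a unique outgoing edge $(i,j)\to(i+1,j+1)$ from each vertex. Iterating this successor map, the vertex $(i,j)$ returns to itself after exactly $k$ steps precisely when $m\mid k$ and $n\mid k$, that is, when $\lcm(m,n)\mid k$. Hence every orbit of the successor map, and therefore every connected component of the tensor product, is a directed cycle of length exactly $\lcm(m,n)$. Since the total number of vertices is $mn$, the number of such components equals $mn/\lcm(m,n)=\gcd(m,n)$, which gives the claimed decomposition.

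The only point that deserves a brief remark is that out-degree $1$ is preserved by the tensor product (it is inherited from the factors $C_m$ and $C_n$), so the connected components of $C_m\otimes C_n$ coincide with the orbits of the successor map and are pairwise disjoint cycles; without this observation one might worry about components that are not simply cycles. No step here is a genuine obstacle.
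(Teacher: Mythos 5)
Your proof is correct: the paper states this lemma without proof (``easily verified''), and your direct verification --- the bijection $\phi(u+w)=(u,w)$ for part (1) and the orbit count of the successor map $(i,j)\mapsto(i+1,j+1)$ on $\Z/m\Z\times\Z/n\Z$ for part (2) --- is exactly the routine argument the author is leaving to the reader. Your closing remark that out-degree and in-degree $1$ force the components to be cycles is the right point to make explicit.
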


The following result from linear algebra is useful. It can be easily deduced from the Rational Canonical form Theorem (that holds for arbitrary fields).

\begin{lemma}\label{thm:la}
Let $T\in \Omega(q, n)$ be a bijective linear map. Then $V_n=\bigoplus_{i=1}^{m_T}U_i$, where 
\begin{enumerate}[(i)]
\item each $U_i$ is $T$-invariant; 
\item the minimal polynomial of $T$ with respect to each $U_i$ is of the form $f_i^{s_i}$, where $f_i(x)\ne x$ is an irreducible polynomial over $\F_q$, and $\deg(f_i^{s_i})=\dim U_i$.
\end{enumerate}
\end{lemma}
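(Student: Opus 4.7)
The plan is to deduce the statement directly from two classical structural theorems: the primary decomposition and the cyclic (elementary-divisor) decomposition supplied by the Rational Canonical Form over $\F_q$.

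First, I would factor the characteristic polynomial of $T$ as $p_T(x)=\prod_{j=1}^{r}g_j(x)^{e_j}$, where the $g_j$ are distinct monic irreducibles in $\F_q[x]$. Since $T$ is bijective, $0$ is not an eigenvalue, so $x\nmid p_T(x)$ and hence $g_j(x)\ne x$ for every $j$. Primary decomposition then furnishes a $T$-invariant direct sum $V_n=\bigoplus_{j=1}^{r}W_j$ with $W_j=\ker g_j(T)^{e_j}$, and the minimal polynomial of $T|_{W_j}$ is a power of $g_j$.

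Second, I would refine each primary piece by the cyclic decomposition, writing $W_j=\bigoplus_{k}U_{j,k}$ where every $U_{j,k}$ is a $T$-cyclic subspace whose minimal polynomial equals $g_j^{s_{j,k}}$ for some positive integer $s_{j,k}$. On a cyclic subspace, the minimal and characteristic polynomials coincide, so $\dim U_{j,k}=\deg(g_j^{s_{j,k}})$. Re-indexing the double-indexed family $\{U_{j,k}\}$ as $\{U_i\}_{i=1}^{m_T}$ and setting $(f_i,s_i)=(g_j,s_{j,k})$ accordingly yields the decomposition claimed in the lemma. Since the argument is essentially a repackaging of standard structural results, I do not expect a serious obstacle; the only observation specific to the present setting is that the bijectivity of $T$ is what excludes $x$ from the list of irreducible factors $f_i$.
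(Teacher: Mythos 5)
Your proof is correct and follows the same route the paper intends: the paper gives no detailed argument, merely remarking that the lemma ``can be easily deduced from the Rational Canonical form Theorem,'' and your primary-plus-cyclic (elementary divisor) decomposition, together with the observation that bijectivity excludes $x$ as an irreducible factor, is precisely that deduction spelled out. No changes needed.
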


The following lemma provides some basic finite field machinery. For its proof, see Section 3.1 of~\cite{LN}.

\begin{lemma}\label{lem:ff}
Let $f\in \F_q[x]$ be a polynomial with $\gcd(f(x), x)=1$. Then there exists a positive integer $e$ such that $f(x)|x^e-1$. The least positive integer with such a property is the order of $f$, denoted by $\ord(f)$. Moreover, the following hold:
\begin{enumerate}[(i)]
\item if $f$ is irreducible of degree $k$ and $m=\ord(f)$, then $$k=\ord_mq=\min\{j>0\,|\, q^j\equiv 1\pmod m\};$$
\item conversely, if $k$ is a positive integer, for every positive integer $m$ such that $\ord_mq=k$, there exists an irreducible polynomial $f\in \F_q[x]$ with $\deg(f)=k$ and $\ord(f)=m$;
\item the polynomial $f$ is separable if and only if $\ord(f)$ is not divisible by $p$, the characteristic of $\F_q$.
\end{enumerate}
\end{lemma}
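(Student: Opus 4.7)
The plan is to work inside the finite quotient ring $R_f=\F_q[x]/(f(x))$. Because $\gcd(f(x),x)=1$, the image of $x$ in $R_f$ is a unit; since $R_f$ has only finitely many units, this image has finite multiplicative order, and that order is precisely the least $e>0$ satisfying $f(x)\mid x^e-1$. This yields both the existence of $\ord(f)$ and its characterization as a minimum.

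For (i), when $f$ is irreducible of degree $k$, one has $R_f\cong \F_{q^k}$, and $m=\ord(f)$ is the order of the image of $x$ in the cyclic group $\F_{q^k}^*$. Therefore $m\mid q^k-1$, giving $\ord_m q\mid k$. For the reverse, I would set $j=\ord_m q$; from $m\mid q^j-1$ we deduce $x^m-1\mid x^{q^j-1}-1\mid x^{q^j}-x$, so every root of $f$ (being a root of $x^m-1$) lies in $\F_{q^j}$, and irreducibility of $f$ forces $k\mid j$. For (ii), the hypothesis $\ord_m q=k$ implies $\gcd(m,q)=1$, so the $m$-th cyclotomic polynomial $\Phi_m$ is well defined over $\F_q$ and its roots are the primitive $m$-th roots of unity inside $\F_{q^k}$. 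The Frobenius $\alpha\mapsto \alpha^q$ permutes these roots, and the orbit of any primitive $m$-th root has size exactly $\ord_m q=k$; hence every $\F_q$-irreducible factor of $\Phi_m$ has degree $k$, and such a factor has order $m$ because its roots are primitive $m$-th roots of unity.

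For (iii), factor $f=\prod_i g_i^{a_i}$ into distinct irreducibles. If $f$ is separable, then every $a_i=1$ and $\ord(f)=\lcm_i\ord(g_i)$; each $\ord(g_i)$ divides $q^{\deg g_i}-1$, which is coprime to $p$, so $p\nmid\ord(f)$. Conversely, write $e=\ord(f)=p^s e'$ with $\gcd(e',p)=1$; the factorization $x^e-1=(x^{e'}-1)^{p^s}$ in $\overline{\F_q}[x]$ shows that every irreducible factor of $x^e-1$ occurs with multiplicity exactly $p^s$. Since $f\mid x^e-1$, each $a_i\leq p^s$, so non-separability of $f$ forces some $a_i\geq 2$, whence $p^s\geq 2$ and therefore $p\mid\ord(f)$. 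The step I expect to be the most delicate is the Frobenius-orbit computation in (ii): one has to verify that the orbit size of a primitive $m$-th root of unity equals $\ord_m q$ exactly (and not merely divides it), which is what ultimately forces the irreducible factors of $\Phi_m$ to have degree equal to, rather than just dividing, $k$.
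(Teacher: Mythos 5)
The paper gives no proof of this lemma; it simply cites Section 3.1 of Lidl--Niederreiter, and your argument is precisely the standard one found there: existence and minimality of $\ord(f)$ via the finite order of $\bar{x}$ in the unit group of $\F_q[x]/(f)$, part (i) via $m\mid q^k-1$ together with the containment of the roots in $\F_{q^{\ord_m q}}$, part (ii) via the Frobenius orbits on primitive $m$-th roots of unity (and you correctly pin down that the orbit size is exactly, not merely a divisor of, $\ord_m q$), and part (iii) via $x^{p^s e'}-1=(x^{e'}-1)^{p^s}$. The proof is correct and complete; nothing further is needed.
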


We end this subsection with the following result from Elspas~\cite{elspas} (see also Section 6 of~\cite{toledo}), which is crucial in this work.

\begin{lemma}\label{elspas}
Fix $U$ a finite dimensional $\F_q$-vector space. Let $T:U\to U$ be a bijective linear map such that its minimal polynomial it is equal to $f(x)^s$, where $f(x)\ne x$ is an irreducible polynomial over $\F_q$ and $\deg(f^s)=\dim U$ . If $\deg(f)=t$, we have that
$$G(T/U)=\sum_{i=0}^{s}\frac{q^{ti}-q^{t(i-1)}}{\ord(f^i)} C_{\ord(f^i)}.$$
\end{lemma}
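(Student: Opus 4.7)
The plan is to identify $U$ with a cyclic $\F_q[x]$-module and then read off the cycle structure of $G(T/U)$ directly from the annihilators of its elements.

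First I would use the hypothesis $\deg(f^s) = \dim U$: since the minimal polynomial $f^s$ has degree equal to $\dim U$, it coincides with the characteristic polynomial, so the $\F_q[T]$-module $U$ is cyclic. Consequently I may identify $U$ with $\F_q[x]/(f(x)^s)$, with $T$ acting as multiplication by $x$. Because $f$ is irreducible, the ideals of $\F_q[x]/(f^s)$ are exactly $(f^i)/(f^s)$ for $0 \le i \le s$, so every $v\in U$ has annihilator $(f^i)$ for a unique $i\in\{0,1,\ldots,s\}$.

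Next I would count the vectors in each annihilator class. The submodule $\{v\in U\,:\, f^i v = 0\}$ equals $(f^{s-i})/(f^s)\cong \F_q[x]/(f^i)$, which is an $\F_q$-vector space of dimension $ti$ and so has $q^{ti}$ elements. By telescoping along the chain of submodules, the number of $v$ with $\mathrm{ann}(v)=(f^i)$ equals $q^{ti}-q^{t(i-1)}$, under the convention $q^{-t}:=0$ that correctly isolates the single vector $v=0$ at $i=0$. Then I would read off the cycle length: since $T$ is bijective and $U$ is finite, $G(T/U)$ is a disjoint union of cycles, and the length of the cycle through $v$ is the least $e>0$ with $T^e v=v$, i.e.\ with $(x^e-1)v=0$ in $\F_q[x]/(f^s)$. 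For $v$ with $\mathrm{ann}(v)=(f^i)$ this is the least $e$ such that $f^i\mid x^e-1$; because $f\ne x$ implies $\gcd(f^i,x)=1$, Lemma~\ref{lem:ff} guarantees that this $e$ exists and equals $\ord(f^i)$. Finally, the annihilator is $T$-invariant (if $f^i v=0$ then $f^i(xv)=x(f^iv)=0$, and the converse follows from bijectivity of $T$), so every vector on a fixed cycle shares the same annihilator. Grouping the $q^{ti}-q^{t(i-1)}$ vectors with annihilator $(f^i)$ into cycles of common length $\ord(f^i)$ yields exactly $(q^{ti}-q^{t(i-1)})/\ord(f^i)$ such cycles, and summing over $i=0,\ldots,s$ gives the claimed formula.

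The main obstacle is really just bookkeeping rather than a substantive difficulty: one must confirm that the uniform expression correctly accommodates the boundary case $i=0$ (a single fixed point at $v=0$, matching $\ord(f^0)=1$ and absorbed by the convention $q^{-t}:=0$), and that when $\ord(f^i)=\ord(f^j)$ for distinct $i,j$ the corresponding terms simply combine as disjoint unions of cycles of that common length. A minor verification is also that the counts $(q^{ti}-q^{t(i-1)})/\ord(f^i)$ are integers, which is automatic from the orbit-counting interpretation above.
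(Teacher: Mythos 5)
Your argument is correct, but note that the paper does not prove this lemma at all: it is imported verbatim from Elspas (1959) (see also Section 6 of Toledo's paper), so there is no in-paper proof to compare against. Your derivation is the standard one and it is sound: the hypothesis $\deg(f^s)=\dim U$ forces the minimal and characteristic polynomials to coincide, so $U\cong \F_q[x]/(f^s)$ as an $\F_q[x]$-module with $T$ acting as multiplication by $x$; irreducibility of $f$ makes the annihilator of any element equal to $(f^i)$ for a unique $0\le i\le s$; the subspace $\{v: f^iv=0\}$ has $q^{ti}$ elements, giving $q^{ti}-q^{t(i-1)}$ elements of annihilator exactly $(f^i)$; and since $\gcd(f,x)=1$, Lemma~\ref{lem:ff} identifies the period of such a $v$ as $\ord(f^i)$, which is constant along each cycle because multiplication by $x$ is invertible modulo $f^s$. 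You were also right to flag the two bookkeeping points: the $i=0$ term needs the convention $q^{-t}:=0$ to isolate the fixed point $v=0$ (the formula as printed literally gives $1-q^{-t}$ there), and distinct $i$ can indeed yield equal $\ord(f^i)$ (since $\ord(f^i)=\ord(f)\,p^{\lceil \log_p i\rceil}$ for $i\ge 1$), in which case the corresponding terms merge as disjoint unions of cycles of the same length. The only thing your write-up adds beyond what is strictly needed is the orbit-counting remark about integrality, which is harmless; as a self-contained replacement for the citation, the proof is complete.
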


\subsection{Some auxiliary results}
As in~\cite{bach}, we define the following relation.
\begin{definition}
Let $X, Y$ be finite sets. Two maps $S:X\to X$ and $T:Y\to Y$ are {\em dynamically equivalent} if there exists a bijection $f:X\to Y$ such that $S=f^{-1}\circ T\circ f$. In this case, we write $T\sim S$.
\end{definition}

The following lemma is easily verified.

\begin{lemma}
For $S, T\in \Omega(q, n)$, we have that $T\sim S$ if and only if the graphs $G(T/V_n)$ and $G(S/V_n)$ are isomorphic.
\end{lemma}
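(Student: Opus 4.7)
The plan is to verify both implications directly from the definitions, with the crucial observation being that $G(T/V_n)$ and $G(S/V_n)$ are \emph{functional} graphs: every vertex has out-degree exactly one, and the unique out-neighbour of $v$ is by definition $T(v)$ (respectively $S(v)$).

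For the forward implication, I would suppose $T \sim S$ via a bijection $f\colon V_n \to V_n$ satisfying $S = f^{-1}\circ T\circ f$, and then argue that $f$ itself serves as the desired graph isomorphism from $G(S/V_n)$ to $G(T/V_n)$. Concretely, the relation $S = f^{-1}\circ T \circ f$ rewrites as $f(S(v)) = T(f(v))$ for every $v \in V_n$, so the edge $v \to S(v)$ in $G(S/V_n)$ is sent to the edge $f(v)\to T(f(v))$ in $G(T/V_n)$. This, together with the fact that $f$ is a bijection of vertex sets, gives a graph isomorphism.

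For the backward implication, I would start with any graph isomorphism $\varphi\colon G(S/V_n) \to G(T/V_n)$, which by definition is a vertex bijection $\varphi\colon V_n \to V_n$ carrying edges to edges. Because each vertex $v$ of $G(S/V_n)$ has a unique outgoing edge, namely $v \to S(v)$, and similarly each vertex $\varphi(v)$ of $G(T/V_n)$ has $\varphi(v) \to T(\varphi(v))$ as its unique outgoing edge, the edge-preservation condition forces $\varphi(S(v)) = T(\varphi(v))$ for every $v$. Hence $\varphi\circ S = T\circ \varphi$, i.e.\ $S = \varphi^{-1}\circ T \circ \varphi$, giving $T \sim S$ with the bijection $\varphi$ itself.

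There is no real obstacle here: the statement is essentially a tautology once one records that in a functional graph, out-degree equals one, so graph isomorphisms automatically intertwine the underlying maps. The only tiny point to emphasize is that one does \emph{not} need $f$ (or $\varphi$) to be $\F_q$-linear; the dynamical equivalence relation only asks for a set-theoretic bijection, which matches exactly what a graph isomorphism of functional graphs provides.
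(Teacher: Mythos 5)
Your proof is correct and is exactly the standard verification that the paper leaves implicit (it states only that the lemma ``is easily verified''). The key observation --- that in a functional graph every vertex has out-degree one, so any edge-preserving vertex bijection must intertwine the underlying maps, and conversely any conjugating bijection preserves edges --- is the whole content, and you have recorded it cleanly in both directions.
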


The following proposition shows that any element $T\in \Omega(q, n)$ splits uniquely into a sum of a bijection and a nilpotent linear map, and such decomposition uniquely determines the isomorphism class of $G(T/V_n)$.

\begin{proposition}\label{prop:1}
Let $T\in \Omega(q, n)$. Then $V_T^{(0)}:=\cup_{k\ge 0}\{v\in V_n\,|\, T^{(k)}(v)=0\}$ is a $T$-invariant subspace of $V_n$. Moreover, there exists a unique subspace $V_T$ such that $V_n=V_T^{(0)}\oplus V_{T}^{(1)}$ and the following hold:
\begin{enumerate}[(i)]
\item there exists $m\ge 0$ such that $T^{(m)}v=0$ for every $v\in V_T^{(0)}$;
\item $V_T^{(1)}$ is $T$-invariant and the restriction of $T$ on $V_T^{(1)}$ is bijective.
\end{enumerate}
Moreover, for $S, T\in \Omega(q, n)$, we have that $T\sim S$ if and only if $T|_{V_T^{(i)}}\sim S|_{V_S^{(i)}}$ for $i=0,1$.
\end{proposition}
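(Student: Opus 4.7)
The plan is to establish the two halves of the proposition separately: the Fitting-type decomposition $V_n=V_T^{(0)}\oplus V_T^{(1)}$ with its uniqueness, and the invariance of the decomposition under dynamical equivalence. The first half is standard linear algebra exploiting finite dimensionality; the second rests on the observation that any set bijection conjugating $S$ to $T$ must respect the cyclic/pre-periodic partition of the functional graph.

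For the decomposition, I would begin by noting that $V_T^{(0)}=\bigcup_{k\ge 0}\ker T^{(k)}$ is an ascending union of subspaces, hence a subspace; $T$-invariance is immediate from $T^{(k)}(v)=0\Rightarrow T^{(k)}(T(v))=0$. Since $V_n$ is finite dimensional, the chain stabilizes at a least $m\ge 0$ with $V_T^{(0)}=\ker T^{(m)}=\ker T^{(m+1)}$, giving property (i). Setting $V_T^{(1)}:=\mathrm{im}\,T^{(m)}$ produces a $T$-invariant subspace, and the identity $\ker T^{(2m)}=\ker T^{(m)}$ forces $V_T^{(0)}\cap V_T^{(1)}=0$; combined with rank-nullity this gives $V_n=V_T^{(0)}\oplus V_T^{(1)}$, and bijectivity of $T|_{V_T^{(1)}}$ follows from $\mathrm{im}\,T^{(m+1)}=\mathrm{im}\,T^{(m)}$ together with the trivial intersection. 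For uniqueness: if $V_n=V_T^{(0)}\oplus W$ with $W$ a $T$-invariant subspace on which $T$ acts bijectively, then $W=T^{(m)}(W)\subseteq V_T^{(1)}$, and a dimension count forces $W=V_T^{(1)}$.

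The bulk of the work lies in the dynamical-equivalence claim. The easy direction: given conjugating bijections $f_i:V_S^{(i)}\to V_T^{(i)}$ for $i=0,1$, the set map $v_0+v_1\mapsto f_0(v_0)+f_1(v_1)$ conjugates $S$ to $T$, using the $S$- and $T$-invariance of the subspaces. For the converse, assume $T\sim S$ via a bijection $f:V_n\to V_n$. I would first observe that $V_T^{(1)}$ is precisely the set of cyclic (periodic) vertices of $G(T/V_n)$, since $T|_{V_T^{(1)}}$ is a permutation of a finite set; because periodicity is preserved by any dynamics-conjugating bijection, $f(V_S^{(1)})=V_T^{(1)}$, and restricting gives $S|_{V_S^{(1)}}\sim T|_{V_T^{(1)}}$ at once. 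For the pre-periodic part, $f$ identifies $V_S^{(0)}$ with $W:=\{w\in V_n:T^{(k)}(w)=f(0)\text{ for some }k\ge 0\}$, and $f(0)$ is a fixed point of $T$ (since $T(f(0))=f(S(0))=f(0)$), hence $f(0)\in V_T^{(1)}$. Writing $u:=f(0)$ and decomposing $w=w_0+w_1$ with $w_i\in V_T^{(i)}$, the condition $T^{(k)}(w)=u$ splits coordinate-wise into $T^{(k)}(w_0)=0$ (automatic) and $T^{(k)}(w_1)=u$; the bijectivity of $T|_{V_T^{(1)}}$ combined with $T(u)=u$ then forces $w_1=u$. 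Thus $W=V_T^{(0)}+u$, and the projection $w_0+u\mapsto w_0$ conjugates $T|_W$ with $T|_{V_T^{(0)}}$; composing this with $f|_{V_S^{(0)}}$ closes the argument.

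The main obstacle I expect is the final step, namely pinning down the set $W$ as a single coset of $V_T^{(0)}$ and producing the explicit projection conjugating the $T$-dynamics. Once this coset description is in place, the rest is a mechanical Fitting-decomposition exercise; the subtlety is that the ambient bijection $f$ need not send $0\in V_S^{(0)}$ to $0\in V_T^{(0)}$, so one genuinely needs the translation-by-$u$ step to convert $f$ into a conjugation between the two nilpotent restrictions.
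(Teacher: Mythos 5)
Your proposal is correct and follows essentially the same route as the paper: the Fitting-type decomposition for the first half, and for the equivalence claim the key observations that $f(0)$ is a $T$-fixed point lying in $V_T^{(1)}$, that $f(V_S^{(0)})$ is the coset $V_T^{(0)}+f(0)$ (handled in the paper by the map $g(u)=f(u)-f(0)$, which is your translation/projection step), and that $V_T^{(1)}$ is exactly the set of periodic points. You supply more detail than the paper on the existence and uniqueness of the decomposition, which the paper dispatches in one line, but the substance is identical.
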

\begin{proof}
The first part follows directly from definition of the set $V_T^{(0)}$, and by taking $V_T^{(1)}$ as the complement of $V_T^{(0)}$ (as a subspace of $V_n$). 

Suppose that $T\sim S$. Hence there exists a bijection $f:V_n\to V_n$ such that $S=f^{-1}\circ T\circ f$. In particular, for $b=f(0)$ we have that $Tb=b$.Furthermore, for every $k\ge 0$, we have that $f\circ S^{(k)}=T^{(k)}\circ f$. The latter, combined with the fact that $f$ is a bijection, implies that $f(V_{S}^{(0)})=b+V_T^{(0)}$. Hence, $g: V_S^{(0)}\to V_T^{(0)}$ with $g(u)=f(u)-b$ is a bijection with inverse $g^{-1}(v)=f^{-1}(v+b)$. Therefore, for every $u\in V_{S}^{(0)}$, we have that $Tg(u)\subseteq V_T^{(0)}$ and so
$$g^{-1}(T(g(u)))=f^{-1}(Tg(u)+b)=f^{-1}(T(f(u))-Tb+b)=f^{-1}(T(f(u)))=S(u).$$
In conslusion, $T|_{V_T^{(0)}}\sim S|_{V_S^{(0)}}$. Let $v=v_0+v_1\in V_n$ with $v_i\in V_T^{(i)}$ be such that $T^{(\ell)}v=v$ for some $\ell>0$. In particular, we can take $\ell$ large enough so that $T^{(\ell)}v_0=0$, hence $v_1+v_0=T^{(\ell)}(v_1+v_0)=T^{(\ell)}(v_1)\in V_{T}^{(1)}$. From the direct sum $V_n=V_T^{(0)}\oplus V_{T}^{(1)}$, we obtain that $v_0=0$, i.e., $v\in V_T^{(1)}$. However, for every $u\in V_S^{(1)}$, we have that $S^{(m_u)}u=u$ for some positive integer $m_u$. In particular, since $f\circ S^{(m_u)}=T^{(m_u)}\circ f$, we obtain that $T^{(m_u)}f(u)=f(u)$ and so $f(u)\in V_T^{(1)}$. In conclusion, we have that $f(V_S^{(1)})=V_T^{(1)}$ and so $T|_{V_T^{(1)}}\sim S|_{V_S^{(1)}}$.

Conversely, suppose that $T|_{V_T^{(i)}}\sim S|_{V_S^{(i)}}$ for $i=0,1$ and let $f_i:V_S^{(i)}\to V_{T}^{(i)}$ be the corresponding bijections making this equivalence. If we set $f:V_n\to V_n$ with $f(v_0+v_1)=f_0(v_0)+f_1(v_1)$ for $v_i\in V_{S^{(i)}}$, we have that
$$f^{-1}(T(f(v_0+v_1)))=f_0^{-1}(T (f_0(v_0))+ f_1^{-1}(T(f_1(v_1)))=S(v_0)+S(v_1)=S(v_0+v_1).$$
In particular, $T\sim S$.
\end{proof}

We obtain the following theorem.

\begin{theorem}\label{thm:1}
Let $S, T\in \Omega(q, n)$ be two nilpotent linear maps. Then $T\sim S$ if and only if the sequence of Jordan blocks lengths in the canonical form of $S$ and $T$ are, up to a permutation, the same. In particular, if $P(n)$ is the number of partitions of the integer $n$ and $B_q(n)$
is the number of non isomorphic functional graphs arising from the bijections in $\Omega(q, n)$, we have that $$A_q(n)=\sum_{k=0}^nP(k)B_q(n-k).$$
\end{theorem}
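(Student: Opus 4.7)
The argument splits naturally into the classification of nilpotent maps and the deduction of the convolution identity.

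For the first assertion, I would prove both directions separately. The ``if'' direction is classical: two nilpotent operators on $\F_q^n$ having the same list of Jordan block lengths are conjugate through an invertible linear map (by the uniqueness of the Jordan canonical form over an arbitrary field), and an invertible linear map is in particular a set bijection, hence $T\sim S$. For the ``only if'' direction, I would use the crucial observation that $0$ is the only fixed point of a nilpotent map: if $Tv=v$ and $T^{(m)}=0$ then $v=T^{(m)}v=0$. Consequently, any bijection $f$ realizing $S=f^{-1}\circ T\circ f$ must map the unique fixed point of $S$ to the unique fixed point of $T$, so $f(0)=0$. Combining this with $f\circ S^{(j)}=T^{(j)}\circ f$ yields $f(\ker S^{(j)})=\ker T^{(j)}$, and hence $\dim\ker S^{(j)}=\dim\ker T^{(j)}$ for every $j\ge 0$. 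The increments $\dim\ker T^{(j)}-\dim\ker T^{(j-1)}$ form the conjugate of the partition of Jordan block lengths of $T$; since conjugation of partitions is involutive, the block-length partitions of $S$ and $T$ coincide.

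For the second assertion, I would rely on Proposition~\ref{prop:1}: every $T\in\Omega(q,n)$ admits a canonical decomposition $V_n=V_T^{(0)}\oplus V_T^{(1)}$ with $T|_{V_T^{(0)}}$ nilpotent and $T|_{V_T^{(1)}}$ bijective, and $T\sim S$ if and only if the two restrictions are equivalent in the respective sense. Setting $k(T):=\dim V_T^{(0)}$, the equivalence of the zero-part restrictions forces $k(T)=k(S)$, since the bijection between $V_S^{(0)}$ and $V_T^{(0)}$ is between two finite $\F_q$-vector spaces, hence of equal dimension. Thus the isomorphism classes of $T$ with $k(T)=k$ are parametrized independently by an equivalence class of a nilpotent map on a $k$-dimensional $\F_q$-space and an equivalence class of a bijective map on an $(n-k)$-dimensional $\F_q$-space. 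By the first part, the former set is counted by the number $P(k)$ of partitions of $k$, and by definition the latter is counted by $B_q(n-k)$. Summing over $k=0,1,\ldots,n$ yields $A_q(n)=\sum_{k=0}^{n} P(k)B_q(n-k)$.

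The only technical point requiring care is the extraction of the Jordan structure from a merely graph-theoretic equivalence: the observation $f(0)=0$ is what converts the set-theoretic intertwining into preservation of the kernel filtration, after which recovering the partition from the sequence of kernel dimensions is a standard fact. Once this is in place, everything else is either immediate from Proposition~\ref{prop:1} or a bookkeeping consequence.
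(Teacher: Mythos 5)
Your proposal is correct and follows essentially the same route as the paper: kernel-dimension increments recover the Jordan block partition for the ``only if'' direction, and the convolution identity is read off from Proposition~\ref{prop:1}. Your explicit justification that $f(0)=0$ (via the unique fixed point of a nilpotent map) is a detail the paper's proof leaves implicit when it asserts that $S$ and $T$ have the same number of solutions of $S^{(i)}u=0$, so your write-up is if anything slightly more careful.
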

\begin{proof}
The ``if'' part is trivial since in this case $S$ and $T$ are, in fact, conjugate maps. For the ``only if'' part, observe that if there is a bijection $f:V_n\to V_n$ such that $S=f^{-1}\circ T\circ f$, then the number of elements $u\in V_n$ such that $S^{(i)}u=0$ is the same for $T$. The latter implies that $$d_i=\dim \ker S^{(i)}-\dim \ker S^{(i-1)}=\dim \ker T^{(i)}-\dim \ker T^{(i-1)},$$ for every $i\ge 1$. However, since $S, T$ are nilpotent, the number $d_i$ just counts the number of Jordan blocks of length $\ge i$ in the canonical form of $S$ (or $T$). We then conclude that, up to a permutation, the sequence of Jordan blocks lengths in the canonical form of $S$ and $T$ coincide. In particular, the number of non isomorphic functional graphs arising
from nilpotent linear maps equals $P(n)$. The identity $A_q(n)=\sum_{k=0}^nP(k)B_q(n-k)$ follows by Proposition~\ref{prop:1}.
\end{proof}

From now and on, our goal is to provide estimates on the function $B_q(j)$ since the growth of the partition function $P(j)$ is well known.

\section{Bijective maps: bounding $B_q(n)$}
We start with the following definition.

\begin{definition}
For a bijection $T\in \Omega(q, n)$, the triple  $$(\deg(f_i), \ord(f_i), s_i)_{\{1\le i\le m_T\}},$$ as in Lemma~\ref{thm:la} is the {\bf graph data} of $T$.
\end{definition}

We obtain the following upper bound on $B_q(n)$.

\begin{lemma}\label{lem:1}
If $\sigma_i$ denotes the number of divisors of $q^i-1$, we have that
$$B_q(n)\le \sum_{\lambda\vdash n}\prod_{i=1}^n\dbinom{\sigma_i+\lambda_i-1}{\lambda_i},$$
where the sum is over all partitions $\lambda$ of $n$, that is, $\sum_{i=1}^n\lambda_i\cdot i=n$.
\end{lemma}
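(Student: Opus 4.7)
The plan is to show that the isomorphism class of $G(T/V_n)$ arising from a bijective $T\in\Omega(q,n)$ is determined by the multiset of graph data triples $(\deg(f_i),\ord(f_i),s_i)$ from Lemma~\ref{thm:la}, and then to bound $B_q(n)$ by counting such multisets.

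First, I would apply Lemma~\ref{thm:la} to decompose $V_n=\bigoplus_{i=1}^{m_T}U_i$, where each $U_i$ is $T$-invariant with minimal polynomial $f_i^{s_i}$ and $f_i\ne x$ is irreducible. Iterating Lemma~\ref{lem:aux}(1) gives, up to isomorphism, $G(T/V_n)=\bigotimes_{i=1}^{m_T}G(T/U_i)$. By Lemma~\ref{elspas}, each factor $G(T/U_i)$ is determined (up to isomorphism) by the graph data triple $(\deg(f_i),\ord(f_i),s_i)$, since Elspas's formula depends only on $t=\deg(f_i)$, $s_i$, and the orders $\ord(f_i^j)$, which are themselves functions of $\ord(f_i)$ and $j$ (via the characteristic). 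Because the tensor product is commutative and associative up to isomorphism, the isomorphism class of $G(T/V_n)$ is determined by the \emph{multiset} of such triples. Therefore $B_q(n)$ is at most the number of admissible multisets.

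Next comes the counting. Since $\dim U_i=\deg(f_i)\cdot s_i$, letting $\lambda_j$ denote the number of indices $i$ with $\deg(f_i)\cdot s_i=j$ yields $\sum_{j=1}^{n}j\lambda_j=n$, so $\lambda$ is a partition of $n$. For each fixed $j$, I need the number of admissible triples $(t,e,s)$ with $ts=j$ and $t=\ord_e q$. Lemma~\ref{lem:ff}(i) gives the forward direction: an irreducible $f$ of degree $t$ and order $e$ forces $t=\ord_e q$; Lemma~\ref{lem:ff}(ii) gives the converse, so that every such pair $(t,e)$ is realized by some irreducible polynomial. Hence such triples are in bijection with pairs $(e,s)$ where $\ord_e q\cdot s=j$, which is the same as divisors $e$ of $q^j-1$ (with $s=j/\ord_e q$ recovered from $e$); there are exactly $\sigma_j$ of them. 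The number of multisets of size $\lambda_j$ from a pool of $\sigma_j$ possibilities is $\dbinom{\sigma_j+\lambda_j-1}{\lambda_j}$, and multiplying over $j$ and summing over partitions $\lambda\vdash n$ gives the stated bound.

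I don't anticipate any serious obstacle; the main care-point is the double use of Lemma~\ref{lem:ff}(i)--(ii) to verify that divisors of $q^j-1$ are in precise bijection with the admissible graph-data triples of total dimension $j$. There is unavoidable slack in the estimate, because two distinct multisets of triples can occasionally yield isomorphic tensor products via the reduction $C_m\otimes C_n=\gcd(m,n)C_{\lcm(m,n)}$ of Lemma~\ref{lem:aux}(2); but this only weakens the counting into an upper bound and requires no additional argument.
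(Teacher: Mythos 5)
Your proposal is correct and follows essentially the same route as the paper: decompose via Lemma~\ref{thm:la}, reduce each factor to its graph data via Lemmas~\ref{lem:aux} and~\ref{elspas}, observe via Lemma~\ref{lem:ff} that the data for a block of dimension $j$ is parametrized by a divisor of $q^j-1$, and count multisets per partition. (Your invocation of Lemma~\ref{lem:ff}(ii) for surjectivity of the parametrization is harmless but unnecessary for an upper bound.)
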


\begin{proof}
Fix a bijection $T\in \Omega(q, n)$ and let $(\deg(f_i), \ord(f_i), s_i)_{\{1\le i\le m_T\}}$ be its graph data. From Lemmas~\ref{lem:aux} and~\ref{elspas}, the graph $G(T/V_n)$ is completely determined by the data $(\ord(f_i), s_i)$, up to a permutation of the indices. From item (i) in Lemma~\ref{lem:ff}, the latter is uniquely determined by the data $(\ord(f_i), s_i\cdot \deg(f_i))$, up to a permutation of the indices. For each $1\le k\le n$, let $\lambda_k$ be the number of integers $1\le i\le m_T$ with $s_i\cdot \deg(f_i)=k$. We have that $\sum_{k=1}^nk\cdot \lambda_k=n$ and, in particular, $\deg(f_i)$ must be a divisor of $k$. From Lemma~\ref{lem:ff}, the only restriction is that $\ord(f_i)$ is a divisor of $q^k-1$. Hence, for each $1\le k\le n$, we only have to choose $\lambda_k$ integers that are divisors of  $q^k-1$, repetitions allowed.  The number of ways of making such a choice equals $\dbinom{\sigma_k+\lambda_k-1}{\lambda_k}$, from where the result follows.
\end{proof}

\subsection{Factoring graphs: a lower bound for $B_q(n)$}
Before proceeding to the lower bound for $B_q(n)$, we need the following combinatorial result which might be of independent interest.

\begin{theorem}\label{thm:factor}
Let $s, t, \alpha_1, \ldots, \alpha_s, \beta_1, \ldots, \beta_t$ and $1\le k_1<\cdots <k_s$, $1\le \ell_1<\cdots <\ell_t$ be positive integers such that 
$$\bigotimes_{i=1}^s\left(C_1+\alpha_i C_{k_i}\right)=\bigotimes_{j=1}^t\left(C_1+\beta_j C_{\ell_j}\right).$$
Then $s=t$, $k_i=\ell_i$ and $\alpha_i=\beta_i$ for all $1\le i\le s$.
\end{theorem}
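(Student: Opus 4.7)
The plan is to convert the tensor-product identity into a family of scalar identities indexed by $d\in\N$ and then invert by M\"obius. For a disjoint union of directed cycles $G=\bigsqcup_k m_k C_k$, let $T$ denote the associated shift (each vertex sent to its unique successor) and put
$$\tau_d(G):=\#\{v\in V(G):T^dv=v\}=\sum_k m_k\cdot k\cdot[k\mid d],$$
the second equality holding because a vertex of $C_k$ is fixed by $T^d$ iff $k\mid d$. The shift on a tensor product of functional graphs is the pair of shifts, so a vertex $(v_1,v_2)$ of $G_1\otimes G_2$ is fixed by the $d$-th iterate iff each $v_i$ is; hence $\tau_d(G_1\otimes G_2)=\tau_d(G_1)\tau_d(G_2)$. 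Applying $\tau_d$ to each factor $C_1+\alpha_iC_{k_i}$ gives $1+\alpha_ik_i[k_i\mid d]$, and the hypothesis then yields
$$\prod_{i:\,k_i\mid d}(1+\alpha_ik_i)=\prod_{j:\,\ell_j\mid d}(1+\beta_j\ell_j)\qquad\text{for every }d\ge1.$$

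I next define $g,h:\N\to\Z_{\ge1}$ by $g(k_i)=1+\alpha_ik_i$, $g(k)=1$ for $k\notin\{k_1,\dots,k_s\}$, and analogously $h$ from the right-hand data. The displayed identity becomes $\prod_{c\mid d}g(c)=\prod_{c\mid d}h(c)$ for every $d\ge1$; taking logarithms reduces it to $\sum_{c\mid d}\log g(c)=\sum_{c\mid d}\log h(c)$, to which ordinary M\"obius inversion applies and gives $g=h$ pointwise. Since $\alpha_i,k_i\ge1$, one has $g(k)>1$ precisely for $k\in\{k_1,\dots,k_s\}$; comparing supports forces $\{k_1,\dots,k_s\}=\{\ell_1,\dots,\ell_t\}$, so by the increasing ordering $s=t$ and $k_i=\ell_i$ for all $i$, and comparing values then gives $\alpha_i=\beta_i$.

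The only step that requires real thought is the multiplicativity $\tau_d(G_1\otimes G_2)=\tau_d(G_1)\tau_d(G_2)$, which converts the graph-theoretic hypothesis into scalar arithmetic; as a sanity check, it is consistent with Lemma~\ref{lem:aux}(2) via $\tau_d(C_m\otimes C_n)=mn[\lcm(m,n)\mid d]=mn[m\mid d][n\mid d]=\tau_d(C_m)\tau_d(C_n)$. After that, everything reduces to a standard M\"obius inversion, so no further obstacle is expected.
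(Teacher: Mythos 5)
Your proof is correct, and it takes a genuinely different route from the paper's. The paper argues by an extremal/minimal-counterexample induction: it first matches the smallest cycle length (splitting into the cases $k_1=1$ and $k_1>1$), then picks the least index $r$ at which the two factorizations could disagree and derives a contradiction by counting cycles of length exactly $k_r$, again in two cases. You instead introduce the complete system of isomorphism invariants $\tau_d(G)=\#\{v: T^dv=v\}$, observe that $\tau_d$ is multiplicative under $\otimes$ (which is the tensor-product analogue of Lemma~\ref{lem:aux}(2), as you note), and thereby convert the graph identity into the divisor-sum identities $\prod_{c\mid d}g(c)=\prod_{c\mid d}h(c)$, which M\"obius inversion resolves in one stroke. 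All steps check out: $\tau_d\bigl(C_1+\alpha_iC_{k_i}\bigr)=1+\alpha_ik_i[k_i\mid d]$ is right, the support comparison $g(k)>1\iff k\in\{k_1,\dots,k_s\}$ uses only $\alpha_i k_i\ge 1$, and the degenerate case $k_1=1$ causes no trouble since $g(1)=1+\alpha_1$ is still recovered. What your approach buys is the elimination of all case analysis and of the minimality argument, at the cost of introducing the invariants $\tau_d$; it also makes transparent \emph{why} the factorization is unique (the family $\{\tau_d\}_d$ already determines the multiset of cycles of any disjoint union of cycles, so the statement reduces to arithmetic of the values $1+\alpha_ik_i$). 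The paper's argument is more elementary in that it only ever counts cycles of a single length at a time, but is correspondingly more delicate to write down.
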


\begin{proof}
Set $G=\bigotimes_{i=1}^s\left(C_1+\alpha_i C_{k_i}\right)$ and $H=\bigotimes_{j=1}^t\left(C_1+\beta_j C_{\ell_j}\right)$. We start with a trite remark: from Lemma~\ref{lem:aux}, any cycle appearing on the both sides of the equality $G=H$ are of length $1$ or length $\lcm(r_1, \ldots, r_u)$ with the $r_i$'s being some of the $k_i$'s or some of the $\ell_i$'s. By counting the cycles of length $k_1$ in $G$ and $H$ we obtain that:
\begin{itemize}
\item if $k_1=1$, we necessarily have that $\ell_1=1$ (since we need at least two cycles of length $1$ in $H$). Therefore, $\alpha_1+1=\beta_1+1$ and so $\alpha_1=\beta_1$.

\item if $k_1>1$, we necessarily have that $\ell_1=k_1$ (since it is the smallest cycle of length greater than $1$ in $G$). Therefore, $\alpha_1=\beta_1$.
\end{itemize}

Suppose, by contradiction, that the theorem does not hold. Hence $s, t>1$ and there exists $2\le r\le \min\{s, t\}$ such that either $k_r\ne \ell_r$ or $k_r=\ell_r$ but $\alpha_r\ne \beta_r$, and $r$ is minimal with this property. We consider these cases separately:

\begin{itemize}
\item Suppose that $k_r\ne \ell_r$; with no loss of generality, assume that $k_r<\ell_r$. In this case, any cycle of length $k_r$ in $H$ must be generated by the product $\bigotimes_{j=1}^{r-1}\left(C_1+\beta_j C_{\ell_j}\right)$; let $L$ be the number of cycles of length $k_r$ is this product. It is clear that $L\ge 1$. Since $r$ is minimal with the above property, it follows that the same product appears in $G$; in fact, $\bigotimes_{j=1}^{r-1}\left(C_1+\alpha_j C_{\ell_j}\right)=\bigotimes_{j=1}^{r-1}\left(C_1+\beta_j C_{\ell_j}\right)$. But $G$ has also the component $1+\alpha_r C_{k_r}$, that combined with that product, generates at least $L+L\cdot \alpha_r\cdot k_r>L$ cycles of length $k_r$ in $G$, a contradiction.

\item Suppose that $k_r=\ell_r$ but $\alpha_r\ne \beta_r$. For each $E|k_r$, let  $L_E$ be the number of cycles of length $E$ in the product $\bigotimes_{j=1}^{r-1}\left(C_1+\alpha_j C_{\ell_j}\right)=\bigotimes_{j=1}^{r-1}\left(C_1+\beta_j C_{\ell_j}\right)$. By counting the cycles of length $k_r$ in $G$ and $H$ we obtain that:

$$L_{k_r}+\alpha_r\cdot \sum_{E|k_r}E\cdot L_E=L_{k_r}+\beta_r\cdot \sum_{E|k_r}E\cdot L_E\Rightarrow \alpha_r=\beta_r,$$
where we used the fact that $\sum_{E|k_r}E\cdot L_E\ge L_1\ge 1$. Again, we get a contradiction.
\end{itemize}
\end{proof}

We obtain the following lower bound for $B_q(n)$.

\begin{theorem}\label{thm:aux:2}
Let $S, T\in \Omega(q, n)$ be bijections such that $T\sim S$ and $G(T/V_n)$ does not contain any cycle of length divisible by $p$. Then the same holds for $S$ and, up to a permutation of indices, the graph data of $S$ and $T$ coincide. In particular, if $\sigma_i^*$ denotes the number of positive integers $m$ with $\ord_mq=i$, we have that
$$B_q(n)\ge \sum_{\lambda \vdash n}\prod_{i=1}^n\dbinom{\sigma_i^*+\lambda_i-1}{\lambda_i},$$
where the sum is over all partitions $\lambda$ of $n$, that is, $\sum_{i=1}^n\lambda_i\cdot i=n$.
\end{theorem}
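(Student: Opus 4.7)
The plan is to characterize those bijections $T\in\Omega(q,n)$ whose functional graph has no cycle of $p$-divisible length, then use Theorem~\ref{thm:factor} to show such a graph rigidly determines the graph data, and finally count how many admissible data exist. Since $T\sim S$ we have $G(T/V_n)\cong G(S/V_n)$, so the absence of $p$-divisible cycles transfers to $S$. Decomposing $V_n$ according to Lemma~\ref{thm:la} and applying Lemma~\ref{elspas} to each block, the graph of block $i$ has the shape $C_1+\sum_{j=1}^{s_i}\alpha_{i,j}\,C_{\ord(f_i^j)}$. In the tensor product of all block graphs, pairing $C_{\ord(f_i^j)}$ with the fixed-point $C_1$ coming from every other block produces a cycle of length $\ord(f_i^j)$ in $G(T/V_n)$. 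Because $\F_q$ is perfect, every irreducible $f_i\ne x$ is separable, so $\gcd(\ord(f_i),p)=1$ by Lemma~\ref{lem:ff}(iii); on the other hand, the standard identity $\ord(f_i^j)=\ord(f_i)\cdot p^{\lceil \log_p j\rceil}$ shows that $p\mid\ord(f_i^j)$ as soon as $j\ge 2$. Hence the hypothesis is equivalent to $s_i=1$ for every $i$, and the same holds for $S$.

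Next I would reduce to Theorem~\ref{thm:factor}. Group the blocks of $T$ by the common value $m=\ord(f_i)$; Lemma~\ref{lem:ff}(i) forces all blocks with order $m$ to have the same degree $\ord_m q$. If $r_m$ blocks share order $m$, a short induction using Lemma~\ref{lem:aux}(2) and the invariant $1+\beta m = q^{r\,\ord_m q}$ proves that the tensor contribution of these blocks is $C_1+\beta_m C_m$ with $\beta_m=(q^{r_m\,\ord_m q}-1)/m$. Writing both $G(T/V_n)$ and $G(S/V_n)$ as $\bigotimes_m(C_1+\beta_m C_m)$ over distinct $m$'s, Theorem~\ref{thm:factor} yields identical sets of $m$'s and matching $\beta_m$'s; since $m$ determines $\ord_m q$, the identity $\beta_m=(q^{r_m\,\ord_m q}-1)/m$ then recovers $r_m$, and the multisets of orders---hence the full graph data---of $T$ and $S$ coincide up to permutation.

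For the lower bound, an admissible graph datum (with every $s_i=1$) is precisely a finite multiset of integers $m$ with $\gcd(m,p)=1$ such that the corresponding degrees $\ord_m q$ sum to $n$. Grouping by $k=\ord_m q$ and letting $\lambda_k$ denote the multiplicity of chosen $m$'s with $\ord_m q=k$, the size constraint reads $\sum_k k\lambda_k=n$ (a partition of $n$), and for each $k$ there are $\binom{\sigma_k^*+\lambda_k-1}{\lambda_k}$ multisets of size $\lambda_k$ drawn from the $\sigma_k^*$ available $m$'s. Lemma~\ref{lem:ff}(ii) realizes each datum as a genuine bijection in $\Omega(q,n)$, and by the first part distinct data produce non-isomorphic graphs, so summing over $\lambda$ gives the claimed lower bound.

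The main obstacle is the telescoping step that rewrites repeated same-order factors as a single $C_1+\beta_m C_m$; Theorem~\ref{thm:factor} requires the cycle lengths appearing in the product to be pairwise distinct, so this grouping is exactly what puts the problem into a shape to which the theorem applies. Once this reduction is in place, the remaining arguments are essentially bookkeeping.
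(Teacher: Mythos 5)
Your proposal is correct and follows essentially the same route as the paper: reduce the no-$p$-cycle hypothesis to $s_i=1$ via the order of $f^s$, telescope the equal-order blocks into a single factor $C_1+\beta_m C_m$, invoke Theorem~\ref{thm:factor} to recover the multiset of orders, and count multisets per partition. The only cosmetic difference is that you justify $p\mid\ord(f^j)$ for $j\ge 2$ via the explicit formula $\ord(f^j)=\ord(f)p^{\lceil\log_p j\rceil}$, whereas the paper cites the separability criterion of Lemma~\ref{lem:ff}(iii); both yield the same conclusion.
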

\begin{proof}
Let $(\deg(f_i), \ord(f_i), s_i)_{\{1\le i\le m_T\}}$ and $(\deg(F_i), \ord(F_i), t_i)_{\{1\le i\le m_S\}}$ be the graph data of $T$ and $S$, respectively. We observe that if $T\sim S$, then $G(T/V_n)$ and $G(S/V_n)$ are isomorphic, hence $S$ does not contain any cycle of length divisible by $p$. Lemma~\ref{lem:ff} entails that, for an irreducible polynomial $f(x)\ne x$ over $\F_q$, $\ord(f^s)$ is not divisible by $p$ if and only if $s=1$.  From Lemmas~\ref{lem:aux} and~\ref{elspas}, it follows that $s_i=1$ for every $1\le i\le m_T$ and $t_j=1$ for every $1\le j\le m_S$.

Let $\mathcal C_T$ be the set of distinct values in the multiset $\{\ord(f_i)\,|\, 1\le i\le m_T\}$ and, for each $a\in C_T$, let $N_{T, a}$ be the number of times that $a$ appears in such multiset. From Lemma~\ref{lem:ff}, for each $a\in C_T$ and each $f_i$ with $\ord(f_i)=a$, $m_{a}:=\ord_{a}q=\deg(f_i)$. Therefore, from Lemmas~\ref{lem:aux} and~\ref{elspas}, we have that
$$G(T/V_n)=\bigotimes_{a\in \mathcal C_T} G_{T, a},$$
where $G_{T, a}=\bigotimes_{d=1}^{N_{T, a}}(C_1+\frac{q^{m_{a}}-1}{a})$. A simple induction implies that $$G_{T, a}=C_1+\frac{q^{N_{T, a}\cdot m_a}-1}{a}C_a,$$ hence $G(T/V_n)$ decomposes as a product like the one in Theorem~\ref{thm:factor}. Proceeding in the same way with $S$, Theorem~\ref{thm:factor} implies that $\mathcal C_S=\mathcal C_T$ and, for each $a\in \mathcal C_T$, $N_{T, a}=N_{S, a}$. In particular, up to a permutation of indices, the graph data of $S$ and $T$ coincide.

We proceed to the lower bound for $B_q(n)$. So far we have proved that $B_q(n)$ is at least the number of distinct data of the form $(\deg(f_i), \ord(f_i), 1)_{\{1\le i\le m_T\}}$ with $\sum_{i=1}^{m_T}\deg(f_i)=n$ (ignoring permutations of the indices). If $e_i=\ord(f_i)$, Lemma~\ref{lem:ff} entails that $\deg(f_i)=\ord_{e_i}q$. Therefore, $B_q(n)$ is at least the number of distinct multisets of the form $\{e_i\}_{1\le i\le r}$, where each $e_i$ divides some $q^{a_i}-1$ and $\sum_{i=1}^{r}\ord_{e_i}q=n$. Observe that each such multiset is naturally associated to a partition $\lambda\vdash n$: $\lambda_k$ counts the indices $1\le i\le r$ with $\ord_{e_i}q=k$. From the definition of $\sigma_i^*$, for a fixed partition $\lambda\vdash n$, there are $\prod_{k=1}^{n}\dbinom{\sigma_k^*+\lambda_k-1}{\lambda_k}$ distinct multisets containing, for each $1\le k\le n$, exactly $\lambda_k$  integers $e$ with $\ord_eq=k$. The proof is complete.
\end{proof}

\section{Effective bounds on $A_q(n)$}
We recall that for functions $f, g:\mathbb R_{>0}\to \mathbb R_{>0}$, we have that $f(x)\ll g(x)$ if there exist $c, M>0$ such that $f(x)\le c\cdot g(x)$ for every $x\ge M$.

The partition function satisfies $P(n)\le 2^{4\sqrt{n}}$ for every $n\ge 1$ (see~\cite{ram}). In particular, combining Lemma~\ref{lem:1} with Theorems~\ref{thm:1} and~\ref{thm:aux:2}, we readily obtain Theorem~\ref{thm:main}. In the notation of Theorem~\ref{thm:main}, we obtain the following inequalities
\begin{equation}\label{eq:main}
\max_{\lambda\in \mathcal P_n}\prod_{i=1}^n\dbinom{\sigma_i^*+\lambda_i-1}{\lambda_i}\le A_q(n)\le (n+1)\cdot 2^{4\sqrt{n}}\max_{\lambda\in \mathcal P_n}\prod_{i=1}^n\dbinom{\sigma_i+\lambda_i-1}{\lambda_i},
\end{equation}
where the upper bound follows from the fact that $$\#\mathcal P_n=\sum_{i=0}^nP(i)\le (n+1)\cdot P(n)\le  (n+1)\cdot 2^{4\sqrt{n}}.$$ We now proceed to some number theory machinery in order to obtain reasonable estimates on $A_q(n)$. We start with the following lemma.

\begin{lemma}\label{auxx}
There exist constants $c, C>0$ such that, for every positive integer $i$ and every prime power $q$, the following inequalities hold:
$$\sigma_i^*\ge C\cdot \sigma_i\ge 2^{\tau(i)-c},$$ where $\tau(i)$ is the number of divisors of $i$.
\end{lemma}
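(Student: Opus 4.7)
The plan is to establish the two inequalities separately, both relying on Zsygmondy's theorem on primitive prime divisors of $q^d - 1$.

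For the lower bound $\sigma_i^* \geq C\sigma_i$, I would first assume that $q^i - 1$ admits a primitive prime divisor, i.e., a prime $p$ with $\ord_p q = i$. Given any divisor $e$ of $q^i - 1$ with $\ord_e q = k < i$, the prime $p$ cannot divide $e$ (otherwise $i = \ord_p q$ would divide $k$), so $\gcd(e, p) = 1$ and $ep$ is a divisor of $q^i - 1$ with $\ord_{ep} q = \lcm(k, i) = i$. The map $e \mapsto ep$ is evidently injective, so the number of divisors of $q^i - 1$ of order strictly less than $i$ is bounded by $\sigma_i^*$. Partitioning divisors by their order then yields $\sigma_i \leq 2\sigma_i^*$, so $C = 1/2$ works.

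Zsygmondy's theorem guarantees a primitive prime divisor except in the cases $(q,i) \in \{(2,1),(2,6)\}$ and $i = 2$ with $q + 1$ a power of $2$. The first two are finite and can be checked by direct inspection. For the infinite family $i = 2$, $q = 2^k - 1$ with $k \geq 2$, I would use the factorization $q^2 - 1 = 2^{k+1}(2^{k-1}-1)$ and observe that the divisors of order $1$ are precisely those of $q - 1 = 2(2^{k-1}-1)$; a short count gives $\sigma_2 = (k+2)\tau(2^{k-1}-1)$ and $\sigma_2^* = k\cdot\tau(2^{k-1}-1)$, so $\sigma_2^*/\sigma_2 = k/(k+2) \geq 1/2$.

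For the bound $\sigma_i \geq 2^{\tau(i) - c}$, I would again invoke Zsygmondy: for each divisor $d$ of $i$ outside the exceptional set, there exists a prime $p_d$ with $\ord_{p_d} q = d$, which necessarily divides $q^i - 1$. Distinct $d$'s produce distinct $p_d$'s, since their orders differ. The exceptional set removes at most the divisors $d \in \{1,2,6\}$ from the count, so $\omega(q^i - 1) \geq \tau(i) - 3$ and therefore $\sigma_i = \tau(q^i - 1) \geq 2^{\omega(q^i-1)} \geq 2^{\tau(i) - 3}$. Taking $C = 1/2$ and $c = 4$ delivers the combined inequality $\sigma_i^* \geq C\sigma_i \geq 2^{\tau(i) - c}$ uniformly in $i$ and $q$.

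The main obstacle is the infinite Zsygmondy exceptional family $i = 2$, $q = 2^k - 1$, where no prime factor of $q^2 - 1$ has order exactly $2$; the explicit factorization of $q^2 - 1$, together with the observation that order-$1$ divisors are exactly those of $q - 1$, sidesteps it cleanly.
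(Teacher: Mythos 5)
Your proof is correct and follows the same two-step strategy as the paper: the injection $e\mapsto ep$ (with $p$ a primitive prime divisor of $q^i-1$) to get $\sigma_i\le 2\sigma_i^*$, and products of distinct primitive primes $p_d$ over divisors $d\mid i$ to get $\sigma_i\ge 2^{\tau(i)-O(1)}$. The one substantive difference is your treatment of the Zsigmondy exceptions, and it is in your favor: the paper asserts that a primitive prime divisor of $q^j-1$ exists whenever $j>6$ \emph{or} $q>6$, and then disposes of the "finitely many" remaining cases by a minimum over small $s$; but the $j=2$ exception ($q+1$ a power of $2$) occurs for arbitrarily large prime powers $q$, e.g. $q=31$, $q^2-1=960=2^6\cdot 3\cdot 5$ has no prime of order $2$, so the exceptional set is not finite and the paper's constant is not justified as written. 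Your explicit computation for the family $q=2^k-1$, $i=2$ — giving $\sigma_2^*/\sigma_2=k/(k+2)\ge 1/2$ via the factorization $q^2-1=2^{k+1}(2^{k-1}-1)$ — closes exactly this gap, and your count of at most three excluded divisors $d\in\{1,2,6\}$ in the second inequality is likewise sharper than the paper's $\tau(i)-6$. In short: same method, but your version is the one that is actually complete.
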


\begin{proof}
From Zsigmondy's Theorem~\cite{zsi}, there exists a constant $M$ such that if $j>M$ or $q>M$, then there exists a prime number $R_j$ with $\ord_{R_j}q=j$ (we can actually take $M=6$). We observe that, if $i>M$ and $m$ divides $q^i-1$ with $\ord_mq<i$, then $mR_i$ also divides $q^i-1$ but $\ord_{mR_i}q=i$. The latter implies that $2\sigma_i^*\ge \sigma_i$, hence we may take any $C\le \frac{1}{2}, \min\{\frac{\sigma_s^*}{\sigma_s}\,|\, 1\le s\le M^M-1\}$. For the second inequality, fix a positive integer $i$ and let $\mathcal A$ be the set of divisors $j$ of $i$ such that $j>M$. It is clear that $\#\mathcal A\ge \tau(i)-M$. Moreover, from hypothesis, for each $a\in \mathcal A$, there exists a prime number $R_a$ such that $\ord_{R_a}q=a$. In particular, any integer of the form $\prod_{a\in A}R_a^{\varepsilon}$ with $\varepsilon\in \{0, 1\}$ is a divisor of $q^i-1$. Hence $\sigma_i\ge 2^{\# A}\ge 2^{\tau(i)-M}$. 
\end{proof}

The following lemma is quite classical.

\begin{lemma}\label{lem:p}
Let $k$ be a positive integer and let $P_k$ be the product of the first $k$ primes. For $k$ large, we have that $\log P_k< 2k\log k$.
\end{lemma}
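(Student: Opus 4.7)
The plan is to reduce this to the Prime Number Theorem (PNT), or, if one prefers elementary methods, to Chebyshev-type bounds.

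First, I would rewrite $\log P_k$ in terms of Chebyshev's first function. If $p_1<p_2<\cdots$ denotes the sequence of primes, then by definition
\[
\log P_k = \sum_{i=1}^k \log p_i = \theta(p_k), \qquad \theta(x):=\sum_{p\le x}\log p.
\]
So the task reduces to bounding $\theta(p_k)$ in terms of $k$.

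Next, I would invoke two well-known asymptotics: the PNT in the form $\theta(x)\sim x$ as $x\to\infty$, and the asymptotic $p_k\sim k\log k$ for the $k$-th prime (which itself follows from PNT). Combining these, $\log P_k=\theta(p_k)\sim p_k\sim k\log k$. In particular, for every $\varepsilon>0$ there is $k_0$ such that $\log P_k<(1+\varepsilon)^2\, k\log k$ for all $k\ge k_0$; taking $\varepsilon$ small enough (say $\varepsilon=1/2$) yields the claimed bound $\log P_k<2k\log k$ for $k$ large.

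If one wishes to avoid PNT, the same argument goes through using Chebyshev's elementary bounds $\theta(x)\le (\log 4)\, x$ together with the classical Rosser-type estimate $p_k\le k(\log k+\log\log k)$ valid for $k\ge 6$; these two inequalities combined give $\log P_k\le (\log 4)\,k(\log k+\log\log k)$, which is less than $2k\log k$ once $k$ is sufficiently large (since $\log 4<2$ and the $\log\log k$ term is absorbed). There is no real obstacle here: the statement is essentially a packaging of standard prime-counting estimates, and the only thing to be careful about is ensuring the multiplicative constant stays strictly below $2$ after combining the two asymptotics.
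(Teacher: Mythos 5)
Your argument is correct and is exactly the standard one; the paper itself offers no proof, simply labelling the lemma ``quite classical,'' so there is nothing to compare against beyond noting that either of your two routes (PNT plus $p_k\sim k\log k$, or Chebyshev's $\theta(x)\le(\log 4)x$ plus a Rosser-type bound on $p_k$) settles it. One tiny slip: with $\varepsilon=1/2$ you get $(1+\varepsilon)^2=2.25>2$, so you should instead take any $\varepsilon<\sqrt{2}-1$; the elementary version has no such issue since $\log 4<2$ with room to absorb the $\log\log k$ term.
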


The following lemma readily follows from Theorem 11 in~\cite{robin}.

\begin{lemma}\label{lem:div}
If $t$ is sufficiently large, we have that $\sigma_t<q^{\frac{2t}{\log t}}$ for any prime power $q$.
\end{lemma}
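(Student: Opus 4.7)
The plan is to deduce the lemma directly from Theorem 11 of \cite{robin}, which provides an effective bound of the form
\[
\log \tau(n) \le \frac{c \log n}{\log\log n}, \qquad n \ge 3,
\]
for an absolute constant $c < 2$. Since $\sigma_t = \tau(q^t - 1)$, the task reduces to verifying that
\[
\frac{c \log(q^t - 1)}{\log\log(q^t - 1)} < \frac{2 t \log q}{\log t}
\]
holds for every prime power $q$ and every sufficiently large $t$, with the threshold on $t$ independent of $q$.

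First I would bound the numerator by the trivial estimate $\log(q^t - 1) \le t \log q$. The more delicate step is a uniform lower bound on the denominator. Since $q \ge 2$ and $t \ge 2$ give $q^t - 1 \ge q^{t-1} \ge 2^{t-1}$, one obtains
\[
\log\log(q^t - 1) \ge \log\bigl((t-1)\log 2\bigr) \ge \log t - C_0
\]
for some absolute constant $C_0 > 0$ and all $t$ large enough. Substituting these two estimates yields
\[
\log \sigma_t \le \frac{c\, t \log q}{\log t - C_0}.
\]

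To finish, I would pick $t$ large enough that $\log t - C_0 > \tfrac{c}{2}\log t$, i.e.\ $\log t > \tfrac{2C_0}{2-c}$. This numerical threshold depends only on $c$ and $C_0$, not on $q$, and it forces $\log \sigma_t < 2t\log q/\log t$, which exponentiates to $\sigma_t < q^{2t/\log t}$. The main technical point is precisely this uniformity in $q$: the strict inequality $c<2$ in Robin's bound leaves a positive gap that can absorb the additive correction $C_0$, and the worst case is $q=2$ (larger $q$ only makes $\log\log(q^t-1)$ bigger and the target $q^{2t/\log t}$ easier to beat). This is what allows the threshold on $t$ to be chosen once and for all, rather than blowing up as $q \to 2$.
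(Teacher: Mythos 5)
Your proposal is correct and is essentially the paper's own argument: the paper offers no proof beyond the remark that the lemma ``readily follows from Theorem 11 in~\cite{robin}'', and your write-up supplies exactly the intended deduction, including the genuinely important point that the threshold on $t$ can be chosen uniformly in $q$ because the slack $c<2$ in the explicit Wigert-type bound absorbs the additive constant in $\log\log(q^t-1)\ge \log t - C_0$, with $q=2$ as the worst case. One caution worth recording: the input you need really is an explicit bound on the divisor function $\tau(n)$ itself (as in Nicolas--Robin, $\tau(n)\le 2^{1.5379\,\log n/\log\log n}$ for $n\ge 3$, i.e.\ $c\approx 1.066$), not merely the bound $\omega(n)\le \log n/(\log\log n-1.1714)$ on the number of distinct prime factors that is often what is meant by ``Theorem 11'' of that reference, since passing from $\omega$ to $\tau$ via $\tau(n)\le\bigl(1+\Omega(n)/\omega(n)\bigr)^{\omega(n)}$ costs an extra factor of order $\log\log\log n$ in the exponent and would not yield the stated inequality.
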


Corollary~\ref{thm:nice} follows by the following result.

\begin{proposition}
Fix $q$ a prime power. Then there exists a constant $L>0$ such that $$\frac{n}{(\log n)^{L\log\log \log n}}\ll \log A_q(n)\ll \frac{n}{\log\log n}.$$
\end{proposition}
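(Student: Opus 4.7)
My plan is to prove the two inequalities separately, using~\eqref{eq:main} together with Lemmas~\ref{auxx}, \ref{lem:p} and~\ref{lem:div}.

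For the upper bound $\log A_q(n)\ll n/\log\log n$, I would start from~\eqref{eq:main}: since the prefactor $(n+1)\cdot 2^{4\sqrt{n}}$ contributes only $O(\sqrt{n})$, it suffices to bound $\max_{\lambda\in\mathcal{P}_n}\sum_i\log\binom{\sigma_i+\lambda_i-1}{\lambda_i}$. The key per-term tool is $\binom{N+k-1}{k}\le (e(N+k-1)/k)^k$, used in whichever orientation is favorable: the representation as $\binom{\sigma_i+\lambda_i-1}{\lambda_i}$ when $\lambda_i\le\sigma_i$, and as $\binom{\sigma_i+\lambda_i-1}{\sigma_i-1}$ when $\lambda_i>\sigma_i$. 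I would then split the outer sum at the cutoff $i_0=\lceil\log n\rceil$. For $i>i_0$, combining the estimate $\log\sigma_i\le 2i\log q/\log i$ from Lemma~\ref{lem:div} with the budget constraint $\sum\lambda_i\cdot i\le n$ yields a contribution of order $n/\log\log n$ (since $\log i\ge\log i_0=\log\log n$). For $i\le i_0$, the swapped bound gives $\log\binom{\sigma_i+\lambda_i-1}{\lambda_i}\ll \sigma_i\log n$, and Lemma~\ref{lem:div} yields $\sum_{i\le i_0}\sigma_i\le i_0\cdot q^{2i_0/\log i_0}=n^{o(1)}$, so this tail contributes only $n^{o(1)}\log n = o(n/\log\log n)$.

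For the lower bound $\log A_q(n)\gg n/(\log n)^{L\log\log\log n}$, I would instantiate the lower side of~\eqref{eq:main} by a partition concentrated at one primorial index. Writing $c$ for the constant in Lemma~\ref{auxx}, pick $k=\lceil\log_2\log n\rceil+C$ with $C$ a suitable absolute constant so that $2^k-c\ge\log_2 n$. For $i=P_k$ we have $\tau(P_k)=2^k$, and Lemma~\ref{auxx} gives $\sigma_{P_k}^{*}\ge 2^{\tau(P_k)-c}\ge n$. Setting $\lambda_{P_k}=\lfloor n/P_k\rfloor$ and $\lambda_j=0$ for $j\ne P_k$, the constraint $\sum\lambda_i\cdot i\le n$ is satisfied and $\lambda_{P_k}\le n\le\sigma_{P_k}^{*}$. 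The elementary bound $\binom{\sigma+\lambda-1}{\lambda}\ge(\sigma/\lambda)^{\lambda}$ then yields $\log A_q(n)\ge\lambda_{P_k}\log(\sigma_{P_k}^{*}/\lambda_{P_k})\ge(n/P_k)\log P_k\ge n/P_k$. By Lemma~\ref{lem:p}, $\log P_k<2k\log k=O(\log\log n\cdot\log\log\log n)$, hence $P_k\le(\log n)^{L\log\log\log n}$ for some constant $L>0$, giving the claimed bound.

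The main obstacle I expect is the careful splitting in the upper bound: one must simultaneously exploit the denominator $\log i$ from Lemma~\ref{lem:div} for $i>\log n$ and control the small-index tail, where individual $\sigma_i$ can be comparatively large even though their sum stays $n^{o(1)}$. Choosing the correct orientation of the binomial coefficient in each index range is essential; bounding it uniformly by $\lambda_i\log\sigma_i$, for instance, yields only a spurious $O(n)$.
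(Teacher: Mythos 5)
Your proposal is correct and follows essentially the same route as the paper: for the lower bound you concentrate the partition at the primorial index $P_k$ with $k\approx\log_2\log_2 n$ and invoke Lemma~\ref{auxx} and Lemma~\ref{lem:p}, exactly as in the paper's choice of $M$; for the upper bound you split the indices at a logarithmic cutoff, bound the small-index terms via the $\binom{\sigma_i+\lambda_i-1}{\sigma_i-1}$ orientation and the large-index terms via Lemma~\ref{lem:div} together with the budget $\sum_i\lambda_i\cdot i\le n$, matching the paper's argument with cutoff $N=\lfloor\log n/(2\log q)\rfloor$. The only deviations are cosmetic (a slightly larger cutoff $i_0=\lceil\log n\rceil$, which forces you to also apply Lemma~\ref{lem:div} in the small range to get $\sum_{i\le i_0}\sigma_i=n^{o(1)}$, and different but equivalent elementary binomial estimates).
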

\begin{proof}
We start with the lower bound. Set $k=\left\lfloor \log_2\log_2 n +1\right\rfloor$ and Let $M$ be the product of the first $k$ primes. From Lemma~\ref{lem:p}, we have that $\log M<\log n$ for large $n$. Set $\lambda_M=\left\lfloor \frac{n}{M}\right\rfloor$, hence $\lambda_M>1$ and $\lambda_M\cdot M\le n$.
From Eq.~\eqref{eq:main}, we have that 
$$A_q(n)\ge \dbinom{\sigma_{M}^*+\lambda_M-1}{\lambda_M}.$$
From Lemma~\ref{auxx}, we have that $\sigma_{M}^*\ge 2^{\tau(M)-c}= 2^{2^k-c},$
for some absolute constant $c$. In particular, for large $n$, we have that $k>\log_2( \log_2 n)$. Hence $2^{2^k-c}>\frac{n}{2^c}$ and so $\sigma_{M}^*>\lambda_{M}+1$ for large $n$. Therefore, we obtain that 
$$ \dbinom{\sigma_{M}^*+\lambda_M-1}{\lambda_M}=\prod_{a=0}^{\lambda_M-1}\frac{\sigma_{M}^*+a}{a+1}>2^{\lambda_M}.$$
In conclusion, $\log A_q(n)\gg \lambda_M\gg \frac{n}{e^{2k\log k}}\gg \frac{n}{(\log n)^{L\log\log\log n}}$ for some $L>0$.

We proceed to the upper bound. Fix $\lambda=(\lambda_1, \ldots, \lambda_n)\in \mathcal P_n$ and set $r_i=\frac{\lambda_i\cdot i}{n}$, hence $\lambda_i=r_i\cdot \frac{n}{i}$ and $\sum_{i=1}^nr_i\le 1$. For $N=\left\lfloor \frac{\log n}{2\log q}\right\rfloor$ and $1\le i\le N$, we have that $\sigma_i\le q^i-1\le q^N-1<\sqrt{n}$ and so
$$\dbinom{\sigma_i+\lambda_i-1}{\lambda_i}=\dbinom{\sigma_i+\lambda_i-1}{\sigma_i-1}\le (\sigma_i+\lambda_i)^{\sigma_i}\le (\sqrt{n}+n)^{\sqrt{n}}\le (2n)^{\sqrt{n}}.$$
If $n$ is sufficiently large and $i>N$, Lemma~\ref{lem:div} entails that $\sigma_i<q^{\frac{2i}{\log i}}$ and so 
$$\dbinom{\sigma_i+\lambda_i-1}{\lambda_i}\le \sigma_i^{\lambda_i}<q^{\frac{2i\cdot \lambda_i}{\log i}}=q^{\frac{2nr_i}{\log i}}.$$
Therefore, $$\prod_{i=1}^n\dbinom{\sigma_i+\lambda_i-1}{\lambda_i}\le (2n)^{N\sqrt{n}}\cdot q^{\sum_{N<j\le n}\frac{2nr_j}{\log j}}\le (2n)^{N\sqrt{n}}q^{\frac{2n}{\log N}},$$ 
where in the last inequality we used that $\sum_{N<j\le n}\frac{2nr_j}{\log j}$ reaches its maximum for $r_N=1$, when $n$ is large. Since $\lambda\in \mathcal P_n$ is arbitrary, Eq.~\eqref{eq:main} entails that
$$\log A_q(n)\ll \sqrt{n}+ N\sqrt{n}\log n+\frac{n}{\log N}\ll \frac{2n}{\log N}\ll \frac{n}{\log\log n}.$$
\end{proof}

\section*{Acknowledgments}
The author was partially supported by PRPq/UFMG (ADRC 09/2019).


\begin{thebibliography}{9}

\bibitem{ram} W.~de Azevedo Pribitkin, ``Simple upper bounds for partition functions''. {\em Ramanujan J} 18: 113--119, 2009.


\bibitem{bach} E.~Bach, A. Bridy.
\newblock ``On the number of distinct functional graphs of affine-linear transformations over finite fields''.
\newblock {\em Linear Algebra Appl.} 439: 1312--1320, 2013.

\bibitem{elspas} B.~Elspas, ``The theory of autonomous linear sequential networks''. {\em IRE Transactions on
Circuit Theory CT} 6:45--60, 1959.





\bibitem{LN}
R.~Lidl and H.~Niederreiter.
\newblock {\em Finite Fields}, volume~20 of {\em Enyclopedia of Mathematics and
  its Applications}.
\newblock Cambridge University Press, Cambridge, second edition, 1997.

\bibitem{robin} G.~Robin.
\newblock ``Estimation de la fonction de Tchebychef  $\theta$  sur le $k$-i\`eme nombre premier et
grandes valeurs de la fonction $\omega(n)$ nombre de diviseurs premiers de $n$'', {\em Acta Arith.} 42(4): 367--389, 1983.

\bibitem{toledo} R.~Toledo, ``Linear finite dynamical systems'', {\em Commun. Algebra} 33: 2977--2989, 2005.





\bibitem{zsi} K. Zsigmondy, ``Zur Theorie der Potenzreste'', {\em Monatsch. Math. Phys.} 3: 265--284, 1892.


\end{thebibliography}
\end{document}